\newtheorem{theorem}{Theorem}
\newtheorem{coro}[theorem]{Corollary}
\newtheorem{remark}[theorem]{Remark}
\newtheorem{definition}[theorem]{Definition}
\newtheorem{example}[theorem]{Example}
\newcommand{\TC}{\mathord{\mathrm{TC}}}
\newcommand{\inside}{\mathord{\mathrm{int}}}
\newcommand{\cl}{\mathord{\mathrm{cl}}}
\newcommand{\cat}{\mathord{\mathrm{cat}}}
\title{Topological complexity of the telescope}
\author[Aleksandra Franc]{Aleksandra Franc}
\address{Institute of Mathematics, Physics and Mechanics\newline\indent Jadranska 19\newline\indent 1000 Ljubljana, Slovenia}
\email{\rm{aleksandra.franc@fmf.uni-lj.si}}
\thanks{The author was supported by the Slovenian Research Agency grant P1-0292-0101.}
\keywords{topological complexity, fibrewise Lusternik-Schnirelmann category, mapping telescope}
\subjclass[2010]{55R70, 55M30}
\begin{document}
\begin{abstract}
We use an alternative definition of topological complexity to show that the topological complexity of the mapping telescope of a sequence $X_1\stackrel{f_1}{\longrightarrow}X_2\stackrel{f_2}{\longrightarrow}X_3\stackrel{f_3}{\longrightarrow}\ldots$ is bounded above by $2\max\{\TC(X_i);\;i=1,2,\ldots\}$.
\end{abstract}
\maketitle

\section{Introduction}

The notion of topological complexity was first introduced by Farber in \cite{Farber:TC}:

\begin{definition}\label{FarberDef}
{\em Topological complexity} $\TC(X)$ of a space $X$ is the least integer $n$ for which there exist an open cover $\{U_1, U_2,\ldots, U_n\}$ of $X\times X$ and sections $s_i\colon U_i\rightarrow X^I$ of the fibration $\pi\colon X^I\rightarrow X\times X$, $\alpha\mapsto (\alpha(0),\alpha(1))$. If no such integer exists we write $\TC(X)=\infty$.
\end{definition}

In \cite{IS}, Iwase and Sakai proved that (for nice spaces $X$) topological complexity is a special case of what James and Morris \cite{JamesMorris} call {\em fibrewise pointed LS category}. A {\em fibrewise pointed space} over a {\em base} $B$ is a topological space $E$, supplied with a {\em projection} $p\colon E\rightarrow B$ and a {\em section} $s\colon B\rightarrow E$. Fibrewise pointed spaces over a base $B$ form a category and the notions of fibrewise pointed maps and fibrewise pointed homotopies are defined as one would expect. More details can be found in \cite{James} and \cite{JamesMorris}.

We consider the product $X\times X$ as a fibrewise pointed space over the base $X$ with the projection to the first component and the diagonal section $\Delta\colon X\rightarrow X\times X$. According to Theorem 1.7 of \cite{IS}, we do not have to work with the fibrewise pointed homotopies but can instead use the less restrictive notion of (unpointed) fibrewise homotopies. A fibrewise homotopy in this case is any homotopy $H\colon X\times X\times I\rightarrow X\times X$ that fixes the first coordinate. So, $H(x,y,t)=(x,h(x,y,t))$ for some homotopy $h\colon X\times X\times I\rightarrow X$. For obvious reasons we call them {\em vertical homotopies}.

We can therefore consider the following theorem as an alternative definition of topological complexity:
\begin{theorem}\label{IwaseDef}
{\em Topological complexity} $\TC(X)$ of a space $X$ is the least integer $n$ for which there exists an open cover $\{U_1, U_2,\ldots, U_n\}$ of $X\times X$ such that each $U_i$ is vertically compressible to the diagonal $\Delta(X)$. If no such integer exists we write $\TC(X)=\infty$.
\end{theorem}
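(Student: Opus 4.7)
My plan is to establish Theorem~\ref{IwaseDef} by showing that the integer it defines coincides with Farber's $\TC(X)$ from Definition~\ref{FarberDef}. Since each is the minimum cardinality of an open cover of $X \times X$ whose members satisfy a local condition, the task reduces, on any fixed open $U \subseteq X \times X$, to producing a natural correspondence between continuous sections $s \colon U \to X^I$ of the evaluation $\pi$ and vertical compressions of $U$ onto $\Delta(X)$.

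For the forward direction, given a section $s \colon U \to X^I$ with $s(x,y)(0) = x$ and $s(x,y)(1) = y$, I would define $H \colon U \times I \to X \times X$ by $H(x,y,t) = (x,\, s(x,y)(1-t))$. Continuity is immediate from the exponential law using local compactness of $I$. Then $H(x,y,0) = (x,y)$, $H(x,y,1) = (x,x) \in \Delta(X)$, and the first coordinate is fixed throughout, so $H$ is a vertical homotopy compressing $U$ onto the diagonal.

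Conversely, if $H(x,y,t) = (x,\, h(x,y,t))$ is a vertical compression with $H(\cdot,\cdot,0) = \mathrm{id}_U$ and $H(U \times \{1\}) \subseteq \Delta(X)$, then verticality forces $h(x,y,1) = x$ and $h(x,y,0) = y$. The adjoint map $s(x,y)(t) := h(x,y,1-t)$ is continuous by the same exponential adjunction and is a section of $\pi$ over $U$.

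The two constructions are mutually inverse up to the reparametrization $t \mapsto 1-t$, so the minimum cover sizes agree. I expect the only real verification to be a careful application of the exponential law, which requires local compactness of $I$ but no further hypotheses on $X$. The genuine depth of Iwase-Sakai's Theorem 1.7 lies in the harder statement that \emph{pointed} fibrewise compressions can be replaced by unpointed ones without changing the invariant; this is already folded into the formulation of Theorem~\ref{IwaseDef} and is the step I would anticipate as the main obstacle if one instead tried to recover $\TC$ from its pointed fibrewise LS-category interpretation.
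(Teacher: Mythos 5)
Your argument is correct, but it follows a genuinely different route from the paper, which offers no proof at all: Theorem~\ref{IwaseDef} is stated there as a consequence of Theorem~1.7 of Iwase--Sakai \cite{IS}, imported wholesale with a ``nice spaces'' caveat. Your direct verification shows that for the statement as literally formulated --- the \emph{unpointed} version, where the compressing homotopies need not be stationary on $\Delta(X)$ and the sets $U_i$ need not contain it --- no such machinery and no niceness hypothesis are required. The correspondence $s(x,y)(t) \leftrightarrow h(x,y,1-t)$ is an honest bijection (up to the reparametrization) between sections of $\pi$ over $U$ and vertical compressions of $U$ into the diagonal: verticality pins the first coordinate, so the endpoint condition $H(x,y,1)\in\Delta(X)$ forces $h(x,y,1)=x$ exactly as you say, and continuity in both directions is the exponential law for the locally compact Hausdorff factor $I$, valid for arbitrary $X$. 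Since the correspondence is set-by-set, the minimal cover cardinalities coincide and the two definitions agree. You also correctly locate the actual content of \cite{IS}: the nontrivial theorem is that the fibrewise \emph{pointed} category (compressions stationary on the section) gives the same number, and that is where the hypotheses on $X$ enter; the paper's subsequent arguments only ever use the unpointed formulation, so your elementary proof fully supports everything that follows. What your approach buys is self-containedness and generality; what the citation buys the author is the connection to the fibrewise LS-category literature that motivates the Remark after Theorem~\ref{maintheorem}.
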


Note that we do not require the homotopies to be stationary on the section $\Delta(X)$, nor do we require the sets $U_i$ to contain the section.

Our result is analogous to the statement concerning LS category proven by Ganea in \cite{Ganea}. He gave an example to show that the LS category of the telescope is not necessarily equal to the LS categories of its parts. As we will see, this is also true for topological complexity. In \cite{Hardie}, Hardie improved Ganea's bound by 1 and Ganea's example shows that Hardie's bound is sharp.

\section{Topological complexity of the telescope}

We approach the problem indirectly by first estimating the topological complexity of an increasing union. The increasing union is much easier to handle and we can explicity construct a cover with the required properties. We then use homotopy invariance of topological complexity to apply the result to mapping telescopes.

\begin{theorem}\label{maintheorem}
Let $X=\bigcup_{i=1}^{\infty}X_i$ be the increasing union of closed subspaces with the property that for each $i$ there exists an open set $Y_i\subset X$ such that $X_i\subset Y_i\subset\cl(Y_i)\subset \inside{(X_{i+1})}$. If $\TC(X_i)\leq n$ for all $i$, then $\TC(X)\leq 2n$.
\end{theorem}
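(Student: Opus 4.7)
The plan is to construct, in the spirit of Theorem~\ref{IwaseDef}, an open cover of $X\times X$ of cardinality $2n$ in which every set is vertically compressible to $\Delta(X)$. The $2n$ sets will come in two parity families of $n$ each; within each family the $l$-th set is obtained by pasting together the $l$-th piece of the $\TC$-cover of $X_i\times X_i$ across all even (respectively all odd) indices~$i$. The factor of $2$ arises because the natural shell decomposition has consecutive shells overlapping, so only after sorting them by parity do we obtain disjoint families of slabs on which the individual vertical compressions can be safely glued.

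First I define the shells. Setting $Y_{-1}=Y_0=\emptyset$, let $V_m=Y_{m+1}\setminus\cl(Y_{m-1})$ for $m\ge 0$. The nesting $\cl(Y_{j-1})\subset\inside(X_j)\subset Y_j$ gives $V_m\subset\inside(X_{m+2})$, and forces $V_m\cap V_{m'}=\emptyset$ whenever $|m-m'|\ge 2$, since in that case $Y_{m+1}\subset\cl(Y_{m'-1})$. A short argument, taking for each $x\in X$ the smallest $p\ge 1$ with $x\in Y_p$, shows $x\in V_{p-1}$, so $\{V_m\}$ is an open cover of $X$. Passing to the product, I set
\[
P_m = (Y_{m+1}\times Y_{m+1})\setminus(\cl(Y_{m-1})\times\cl(Y_{m-1})) = (V_m\times Y_{m+1})\cup(Y_{m+1}\times V_m).
\]
Each $P_m$ is open in $X\times X$ and contained in $\inside(X_{m+2})\times\inside(X_{m+2})$; the same inclusion argument gives $P_m\cap P_{m'}=\emptyset$ when $|m-m'|\ge 2$, and for any $(x,y)\in X\times X$, taking $p=\max(m_x,m_y)$ exhibits $(x,y)\in P_{p-1}$, so $\{P_m\}$ covers $X\times X$.

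For each $i$ I fix an open cover $\{U_1^i,\ldots,U_n^i\}$ of $X_i\times X_i$ with vertical compressions $H_l^i\colon U_l^i\times I\to X_i\times X_i$ to $\Delta(X_i)$, as supplied by Theorem~\ref{IwaseDef}. Because $P_m\subset\inside(X_{m+2})\times\inside(X_{m+2})$, the set $U_l^{m+2}\cap P_m$ is well-defined and open in $X\times X$, independent of any choice of open extension of $U_l^{m+2}$ from $X_{m+2}\times X_{m+2}$ to $X\times X$. For $l=1,\ldots,n$ I define
\[
\tilde U_l^{\mathrm{ev}}=\bigsqcup_{k\ge 0}\bigl(U_l^{2k+2}\cap P_{2k}\bigr),\qquad \tilde U_l^{\mathrm{od}}=\bigsqcup_{k\ge 0}\bigl(U_l^{2k+3}\cap P_{2k+1}\bigr).
\]
Because $\{P_{2k}\}_k$ and $\{P_{2k+1}\}_k$ are each pairwise disjoint, these are honest disjoint unions of open sets in $X\times X$, and the individual vertical compressions $H_l^{2k+2}$ and $H_l^{2k+3}$, each mapping into some $X_i\times X_i\subset X\times X$ and terminating in $\Delta(X_i)\subset\Delta(X)$, paste slab by slab into continuous vertical compressions of $\tilde U_l^{\mathrm{ev}}$ and $\tilde U_l^{\mathrm{od}}$ to $\Delta(X)$. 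Any $(x,y)\in X\times X$ lies in some $P_m$, and $P_m\subset X_{m+2}\times X_{m+2}$ is covered by $\{U_l^{m+2}\}_{l=1}^{n}$, so the $2n$ sets $\tilde U_l^{\mathrm{ev}},\tilde U_l^{\mathrm{od}}$ cover $X\times X$; Theorem~\ref{IwaseDef} then gives $\TC(X)\le 2n$.

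The most delicate step is the verification of the three properties of the product shells---openness, disjointness for $|m-m'|\ge 2$, and coverage---all of which hinge on the nesting $\cl(Y_{j-1})\subset\inside(X_j)$. It is the disjointness property that is responsible for the factor of~$2$: without it the even/odd split would fail and one would apparently need $4n$ rather than $2n$ sets.
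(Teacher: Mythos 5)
Your proposal is correct and follows essentially the same strategy as the paper's proof: you build annular open shells in $X\times X$ from the nested neighborhoods $Y_i$ (the paper uses $L_i=\inside(X_i\times X_i)\setminus\cl(Y_{i-2}\times Y_{i-2})$ where you use $P_m=(Y_{m+1}\times Y_{m+1})\setminus(\cl(Y_{m-1})\times\cl(Y_{m-1}))$), intersect them with the $\TC$-covers of the pieces, and exploit the disjointness of shells whose indices differ by at least $2$ to glue the vertical compressions within each parity class, yielding $2n$ sets. The only differences are the re-indexing and your slightly more explicit justification that the intersections are open in $X\times X$; the substance is identical.
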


\begin{proof}
Since $X_i\subset X_{i+1}$ for all $i$, we have $X_i\times X_i\subset X_{i+1}\times X_{i+1}$ for all $i$ and the product $X\times X=\bigcup_{i=1}^{\infty}X_i\times X_i$ is an increasing union of its subspaces. Let $\{U_j^{(i)}\}_{j=1}^n$ be an open cover of $X_i\times X_i$ with sets $U_j^{(i)}$ vertically compressible to the diagonal $\Delta(X_i)\subset\Delta(X)$.
Define $L_i=\inside(X_i\times X_i)-\cl(Y_{i-2}\times Y_{i-2})$ for $i>2$, $L_2=\inside(X_2\times X_2)$, $L_1=\inside(X_1\times X_1)$. Here, $\inside(A)$ and $\cl(A)$ denote the interior and the closure of $A$ as a subset of $X\times X$. Let $V_j^{(i)}=U_j^{(i)}\cap L_i$ and consider the sets
$$W_1 = \bigcup_{i=1}^\infty V_1^{(2i-1)}, W_2 = \bigcup_{i=1}^\infty V_1^{(2i)},\ldots, W_{2n-1} = \bigcup_{i=1}^\infty V_n^{(2i-1)}, W_{2n} = \bigcup_{i=1}^\infty V_n^{(2i)}.$$
Figure \ref{figure} illustrates the construction of the first three sets from $W_1$.

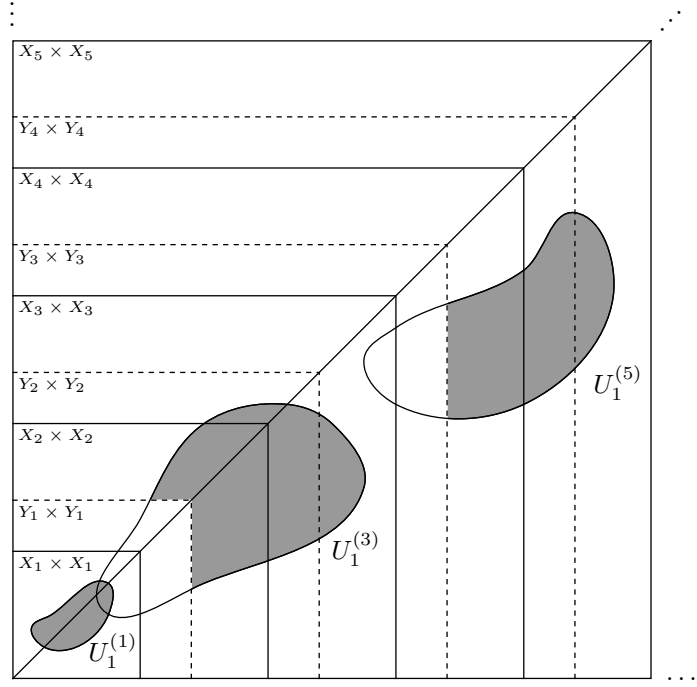
\begin{figure}[ht]
\begin{center}
\begin{pspicture}(0,0)(8.5,8.8)
\psset{linewidth=0.5pt,dash=2pt 2pt,unit=0.85cm}
\psset{labelsep=3pt}
\definecolor{siva}{gray}{1}
\definecolor{svsiva}{gray}{0.6}
\definecolor{tesiva}{gray}{0}
\psccurve[fillstyle=solid,fillcolor=svsiva](5.5,5)(6,4.3)(8,4.3)(9.4,6)(8.8,7.3)(8,6.4)(6,5.5)
\pspolygon[fillstyle=solid,fillcolor=siva,linecolor=siva](6.8,0)(6.8,6.8)(0,6.8)
\psccurve[fillstyle=solid,fillcolor=svsiva](1.4,1.1)(3,1.5)(5.5,3)(5,4)(4.3,4.3)(3,4)(2,2.5)
\pspolygon[fillstyle=solid,fillcolor=siva,linecolor=siva](2.8,0)(2.8,2.8)(0,2.8)(1,0)
\psccurve[fillstyle=solid,fillcolor=svsiva](0.5,0.5)(1,0.5)(1.5,1)(1.5,1.5)(0.6,1)(0.3,0.8)
\psline[linestyle=dashed,linecolor=tesiva](2.8,0)(2.8,2.8)(0,2.8)
\psline[linestyle=dashed,linecolor=tesiva](4.8,0)(4.8,4.8)(0,4.8)
\psline[linestyle=dashed,linecolor=tesiva](6.8,0)(6.8,6.8)(0,6.8)
\psline[linestyle=dashed,linecolor=tesiva](8.8,0)(8.8,8.8)(0,8.8)
\psframe(0,0)(10,10)
\psline(0,0)(10,10)
\psline(2,0)(2,2)(0,2)
\psline(4,0)(4,4)(0,4)
\psline(6,0)(6,6)(0,6)
\psline(8,0)(8,8)(0,8)
\uput[dr](0,2){\tiny $X_1\times X_1$}
\uput[dr](0,4){\tiny $X_2\times X_2$}
\uput[dr](0,6){\tiny $X_3\times X_3$}
\uput[dr](0,8){\tiny $X_4\times X_4$}
\uput[dr](0,10){\tiny $X_5\times X_5$}
\uput[dr](0,2.8){\tiny $\color{tesiva}Y_1\times Y_1$}
\uput[dr](0,4.8){\tiny $\color{tesiva}Y_2\times Y_2$}
\uput[dr](0,6.8){\tiny $\color{tesiva}Y_3\times Y_3$}
\uput[dr](0,8.8){\tiny $\color{tesiva}Y_4\times Y_4$}
\uput[dr](1.1,0.8){$U_1^{(1)}$}
\uput[dr](4.9,2.4){$U_1^{(3)}$}
\uput[dr](9,5){$U_1^{(5)}$}
\psccurve(1.4,1.1)(3,1.5)(5.5,3)(5,4)(4.3,4.3)(3,4)(2,2.5)
\psccurve(5.5,5)(6,4.3)(8,4.3)(9.4,6)(8.8,7.3)(8,6.4)(6,5.5)
\psccurve(0.5,0.5)(1,0.5)(1.5,1)(1.5,1.5)(0.6,1)(0.3,0.8)
\uput[u](0,10.1){$\vdots$}
\uput[r](10.1,0){$\ldots$}
\rput{45}(10.3,10.3){$\ldots$}
\end{pspicture}
\end{center}
\caption{The shaded areas represent the sets $V_1^{(1)}$, $V_1^{(3)}$ and $V_1^{(5)}$. These sets are all part of $W_1$.}
\label{figure}
\end{figure}

We observe the following:
\begin{itemize}
\item Every $(x,y)\in X$ belongs to $L_i$ for some $i$ and is therefore contained in $V_j^{(i)}$ for some $j$. So, $\{W_k\}_{k=1}^{2n}$ covers $X\times X$.
\item Each $V_j^{(i)}$ can be compressed to $\Delta(X_i)\subset\Delta(X)$ by the restriction of the vertical homotopy defined on $U_j^{(i)}$.
For all positive integers $l$ and $m$ we have $L_{l}\cap L_{m}=\emptyset$ as long as $|l-m|\geq 2$, so $V_j^{(l)}\cap V_j^{(m)}=\emptyset$ for $|l-m|\geq 2$. 
The vertical homotopies we defined on $V_j^{(i)}$ can therefore be combined to define a (continuous) homotopy that vertically compresses $W_k$ to $\Delta(X)$.
\item The sets $L_i$ are open in $X\times X$, so $V_j^{(i)}=U_j^{(i)}\cap L_i$ are open in $L_i$ and therefore in $X\times X$. Each $W_k$ is defined as a union of open sets, so all $W_k$ are open.
\end{itemize}
From this we infer that $\{W_k\}_{k=1}^{2n}$ is indeed an open cover of $X\times X$ with each $W_k$ vertically compressible to $\Delta(X)$. The conclusion now follows from Theorem \ref{IwaseDef}.
\end{proof}

\begin{remark}
The proof of Theorem \ref{maintheorem} can be reused with only minor alterations to notation to prove a slightly more general statement. For a fibrewise pointed space $p\colon E\rightarrow B$ with section $s$ denote by $\cat_B^*(E)$ the {\em fibrewise unpointed category} as in Definition 1.6 of \cite{IS}. Assume that $E=\bigcup_{i=1}^{\infty}E_i$ is an increasing union of closed subspaces with the property that $s(p(E_i))\subset E_i$ and that there exist open sets $Y_i\subset E$ such that $E_i\subset Y_i\subset\cl{(Y_i)}\subset\inside{(E_{i+1})}$. Let $B_i=p(E_i)$ and denote by $p_i\colon E_i\rightarrow B_i$ the restriction of $p$ to $E_i$ with the section $s_i$ being the restriction of section $s$ to $B_i$. If $\cat_{B_i}^*(E_i)\leq n$, then $\cat_{B}^*(E)\leq 2n$.
\end{remark}

We now represent a mapping telescope as an increasing union of subspaces and obtain the following result:

\begin{coro}\label{maincoro}
Let $X=\bigcup_{i=1}^{\infty}X_i\times[i-1,i]$ be the mapping telescope of a sequence of maps $$X_1\stackrel{f_1}{\longrightarrow}X_2\stackrel{f_2}{\longrightarrow}X_3\stackrel{f_3}{\longrightarrow}\ldots$$
and let $\TC(X_i)\leq n$ for all $i$. Then $\TC(X)\leq 2n$.
\end{coro}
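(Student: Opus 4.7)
The plan is to realise the mapping telescope as an increasing union satisfying the hypotheses of Theorem \ref{maintheorem}, and then invoke homotopy invariance of topological complexity to replace the topological complexities of the partial telescopes by those of the $X_i$.

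First, I would introduce the partial telescopes $T_k = \bigcup_{i=1}^{k} X_i\times[i-1,i]$, which form an increasing sequence of closed subspaces of $X$ with $X=\bigcup_{k=1}^{\infty}T_k$. To produce the required open neighbourhoods, I would let $Y_k$ consist of all points of $X$ whose telescope time-coordinate is strictly less than $k+\tfrac{1}{2}$; that is, $Y_k=T_k\cup\bigl(X_{k+1}\times[k,k+\tfrac{1}{2})\bigr)$. Then $Y_k$ is open in $X$, $T_k\subset Y_k$, and $\cl(Y_k)$ consists of points of time-coordinate at most $k+\tfrac{1}{2}$, which is contained in the set of points of time-coordinate strictly less than $k+1$, and this latter set is precisely $\inside(T_{k+1})$. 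Thus the hypotheses of Theorem \ref{maintheorem} are met by the filtration $\{T_k\}$.

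Next, I would show $\TC(T_k)\leq n$. The standard deformation retraction of a mapping cylinder onto its target, applied iteratively to the topmost cylinder $X_k\times[k-1,k]$ and cascading downward, exhibits $T_k$ as strong deformation retract of $X_k$ (embedded as $X_k\times\{k\}$). In particular $T_k$ and $X_k$ are homotopy equivalent, and since topological complexity is a homotopy invariant (Theorem 3 of \cite{Farber:TC}), we have $\TC(T_k)=\TC(X_k)\leq n$. Plugging the filtration $\{T_k\}$ into Theorem \ref{maintheorem} then yields $\TC(X)\leq 2n$.

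The only place any real care is needed is in checking that the candidate neighbourhoods $Y_k$ have the right interior/closure behaviour in the telescope topology, which is the quotient topology inherited from $\bigsqcup_i X_i\times[i-1,i]$; once one pins down that the ``time-coordinate'' is a continuous function on $X$, both conditions are immediate. Everything else is standard: the construction of the filtration is forced by the telescope structure, the deformation retraction is classical, and the appeal to Theorem \ref{maintheorem} is direct.
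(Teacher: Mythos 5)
Your proof follows the paper's argument essentially verbatim: the same filtration by partial telescopes, the same choice of $Y_k$, the same appeal to homotopy invariance and to Theorem \ref{maintheorem}. The only quibble is a slip of wording where you say the retraction ``exhibits $T_k$ as strong deformation retract of $X_k$'' --- it is $X_k\times\{k\}$ that is the deformation retract of $T_k$ --- but the intended conclusion (homotopy equivalence) is correct.
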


\begin{proof}
Define $X'_n=\bigcup_{i=1}^{n}X_i\times[i-1,i]$ to be the union of the first $n$ mapping cylinders in the telescope $X=\bigcup_{i=1}^{\infty}X_i\times[i-1,i]$. Then $X$ is the increasing union $X=\bigcup_{i=1}^{\infty}X'_i$ and we can take
$$Y_i=\left(\bigcup_{i=1}^{n}X_i\times[i-1,i]\right)\cup X_{i+1}\times\left[i,i+1/2\right).$$
Since $X'_i$ are homotopy equivalent to $X_i$ for all $i$, we have $\TC(X'_i)=\TC(X_i)\leq n$ for all $i$. The conclusion now follows from Theorem~\ref{maintheorem}.
\end{proof}

Finally, here is an equivalent formulation of Corollary \ref{maincoro}:

\begin{coro}
Let $X=\bigcup_{i=1}^{\infty}X_i\times[i-1,i]$ be the mapping telescope of a sequence of maps $$X_1\stackrel{f_1}{\longrightarrow}X_2\stackrel{f_2}{\longrightarrow}X_3\stackrel{f_3}{\longrightarrow}\ldots.$$
Then $\TC(X)\leq 2\max\{\TC(X_i);\;i=1,2,\ldots\}$.
\end{coro}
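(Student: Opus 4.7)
The plan is to recognize this corollary as a direct restatement of Corollary \ref{maincoro}, with the common upper bound on the $\TC(X_i)$ named explicitly. I would set $n = \max\{\TC(X_i);\; i = 1, 2, \ldots\}$, interpreted as a supremum when the set of values is not attained. If $n = \infty$, the claimed inequality $\TC(X) \leq 2n$ is vacuous, so there is nothing to prove.

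If on the other hand $n$ is a finite nonnegative integer, then by the very definition of the maximum one has $\TC(X_i) \leq n$ for every $i$. This is precisely the hypothesis of Corollary \ref{maincoro}, applied to the mapping telescope $X = \bigcup_{i=1}^{\infty} X_i \times [i-1, i]$ of the given sequence. That corollary then yields $\TC(X) \leq 2n = 2\max\{\TC(X_i);\; i = 1, 2, \ldots\}$, which is the desired bound.

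There is no substantive obstacle here: all of the geometric content, namely the construction of a $2n$-element vertically compressible open cover of $X \times X$ from the sets $W_k$, was carried out in the proof of Theorem \ref{maintheorem}, and the passage to mapping telescopes was dealt with in Corollary \ref{maincoro} by means of the homotopy equivalences $X'_i \simeq X_i$. The present statement merely re-expresses the resulting bound in terms of the supremum over $i$ rather than an a priori given integer $n$, so the proof amounts to choosing that integer optimally.
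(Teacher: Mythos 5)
Your argument is correct and matches the paper's proof essentially verbatim: both split into the case where the maximum (supremum) is infinite, making the bound vacuous, and the case where it is a finite integer $M$, to which Corollary \ref{maincoro} applies directly. No issues.
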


\begin{proof}
If $\TC(X_i)$ are not bounded above, then $\max\{\TC(X_i);\;i=1,2,\ldots\}=\infty$ and the statement is trivially true. If $\max\{\TC(X_i);\;i=1,2,\ldots\}=M<\infty$, then $\TC(X_i)\leq M$ for all $i$ and Corollary~\ref{maincoro} implies that $\TC(X)\leq 2M$.
\end{proof}

\begin{example}
The mapping telescope of the sequence
$$S^1\stackrel{\cdot 2}{\longrightarrow}S^1\stackrel{\cdot 2}{\longrightarrow}S^1\stackrel{\cdot 2}{\longrightarrow}\ldots$$
is $X=K(\mathbb{Z}[\frac{1}{2}],1)$. We have $\TC(S^1)=2$ and Corollary \ref{maincoro} implies that $\TC(X)\leq 4$. The cohomology of $X$ is nontrivial only in dimension 2, and there we have $H^2(X;\mathbb{Z})=\hat{\mathbb{Z}}_2/\mathbb{Z}$, where $\hat{\mathbb{Z}}_2$ denotes the group of $2$-adic integers (detailed calculations can be found in \cite{Hatcher}, Section 3F, in particular Example 3F.9). Elements of finite order in $\hat{\mathbb{Z}}_2/\mathbb{Z}$ are represented by rational numbers. Since $\hat{\mathbb{Z}}_2/\mathbb{Z}$ is uncountable, there exists an element $u\in H^2(X;\mathbb{Z})$ of infinite order and we obtain a non-trivial product of length $2$:
$$(1\otimes u-u\otimes 1)^2=-2u\otimes u\in H^2(X;\mathbb{Z})\otimes H^2(X;\mathbb{Z}).$$
Combining Theorem 7 of \cite{Farber:TC} and Theorem 4 of \cite{Schwarz} we get a lower bound in terms of zero-divisors: $\TC(X)\geq 3$.
So, $3\leq\TC(X)\leq 4$.

Notice how in this example our upper bound is better than the standard upper bounds in terms of dimension and LS category (see \cite{Farber:TC}, Theorem 4 and Theorem 5), although it is not low enough to determine $TC(X)$.
\end{example}
This example shows that the topological complexity of the telescope $X$ can be greater than the topological complexity of its parts $X_i$. The question remains of whether our bound can be improved by 1.

\end{document}